\def\R{{\mathbb{R}}}
\def\N{{\mathbb{N}}}
\def\Z{{\mathbb{Z}}}
\def\P{{\mathbb P}} 
\def \torus{{{\mathbb T}^d_N}} 
\def \range{{{\mathcal I}^u_N}} 
\def \I{{\mathcal I}} 
\def \mix{t_{\mathrm{mix}}}
\def \GG{{\mathbb G}} 
\def \lscale{{\lambda}} 
\def \ballZ{{\mathrm B}} 
\def \atom{\omega} 
\def \set{{\mathcal S}} 
\def \cemax{{\widetilde {\mathcal C}}}
\newtheorem{theorem}{Theorem}[section]
\newtheorem{corollary}[theorem]{Corollary}
\newtheorem{lemma}[theorem]{Lemma}
\theoremstyle{definition}
\newtheorem{definition}[theorem]{Definition}
\newtheorem{remark}[theorem]{Remark}
\numberwithin{equation}{section}
\begin{document}

\title{Mixing time for the random walk on the range of the random walk on tori}

\author{Jiří Černý}
\address{
  Jiří Černý,
  University of Vienna, Faculty of Mathematics,
  Oskar-Morgenstern-Platz 1, 1090 Vienna, Austria.
}
\email{jiri.cerny@univie.ac.at}

\author{Artem Sapozhnikov}
\address{
  Artem Sapozhnikov,
  University of Leipzig, Department of Mathematics,
  Augustusplatz 10, 04109 Leipzig, Germany.
}
\email{artem.sapozhnikov@math.uni-leipzig.de}

\begin{abstract}
  Consider the subgraph of the discrete $d$-dimensional torus of size
  length $N$, $d\geq 3$, induced by the range of the simple random walk
  on the torus run until the time $uN^d$. We prove that for all $d\geq 3$
  and $u>0$, the mixing time for the random walk on this subgraph is
  of order $N^2$ with probability at least $1 - Ce^{-(\log N)^2}$.
\end{abstract}

\subjclass[2000]{60K37, 58J35.}
\keywords{Random walk, mixing time, isoperimetric inequality, random
  interlacements, coupling.}

\maketitle

\section{Introduction}

Let $X_n$ be a simple random walk  on a large $d$-dimensional discrete
torus $\torus = (\Z/N\Z)^d$, $d\geq 3$, started from the uniform
distribution on $\torus$. For $u>0$ and $N\in\N$, let
\[
  \range = \left\{X_0,\ldots,X_{\lfloor uN^d\rfloor}\right\}
\]
be its range on the time interval $[0,uN^d]$. We view $\range$ as a
subgraph of $\torus$ in which the edges are drawn between any two
vertices  within $\ell^1$-distance $1$ from each other.

In this note, we are interested in the behavior of the mixing time of the
random walk on this graph as $N$ grows while $u>0$ remains fixed. We
prove that the mixing time is of order $N^2$ and give bounds on the
probability of the good event.

To state our main theorem, we recall that a lazy random walk on a finite
connected graph $G=(V,E)$ is a Markov chain with the transition
probabilities $\{p(x,y)\}_{x,y\in V}$ given by
\[
  p(x,y) = \begin{cases}
    \frac 12 & \text{if }x=y,\\
    \frac{1}{2 d_x} & \text{if }|x-y|_1=1, \\
    0 & \text{otherwise},
  \end{cases}
\]
where $d_x$ is the degree of $x$ in $G$. The
\emph{$\frac14$-uniform mixing time} (or simply mixing time)
of the lazy random walk on $G$ is defined by
\[
  \mix(G) = \min\left\{n\,:\,\left|\frac{p_n(x,y) -
      \pi(y)}{\pi(y)}\right|\leq \frac14,~\text{for all }x,y\in V\right\},
\]
where $\pi$ denotes the (unique) stationary
distribution of the walk, and $p_n$ its $n$-step transition probability.

Our main result is the following theorem.

\begin{theorem}\label{thm:mixingtime}
  Let $d\geq 3$ and $u>0$. There exist $c=c(d,u)>0$ and
  $C=C(d,u)<\infty$ such that for all $N\in\N$,
  \begin{equation}\label{eq:mixingtime}
    \P\left[cN^2\leq \mix(\range)\leq CN^2\right]\geq 1 - Ce^{-(\log N)^2}.
  \end{equation}
\end{theorem}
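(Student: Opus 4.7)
The plan is to derive the upper and lower bounds in \eqref{eq:mixingtime} separately: the lower bound is a soft spectral-gap argument, while the upper bound rests on an isoperimetric inequality for $\range$, which is the main technical input.

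For the lower bound, I would bound the spectral gap $\lambda_{\mathrm{gap}}$ of the lazy walk on $\range$ from above via the variational principle, testing the Dirichlet form against $f(x)=\cos(2\pi x_1/N)$ viewed on $\torus$ and restricted to $\range$. Since $|f(x)-f(y)|\leq 2\pi/N$ whenever $|x-y|_1=1$, one has $\mathcal E(f,f)\leq C/N^2$ directly, while a Parseval-type computation shows $\mathrm{Var}_\pi(f)\asymp 1$ provided $|\range|\asymp N^d$ and each ``slice'' $\range\cap\{x_1=k\}$ has size $\asymp N^{d-1}$; both are standard first-moment and concentration estimates for the range, valid with probability at least $1-Ce^{-(\log N)^2}$. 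This yields $\lambda_{\mathrm{gap}}\leq C/N^2$ and hence $\mix(\range)\geq c/\lambda_{\mathrm{gap}}\geq cN^2$ from the usual relation between spectral gap and uniform mixing.

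For the upper bound, the core step is to establish that, with probability at least $1-Ce^{-(\log N)^2}$, every $A\subseteq\range$ with $|A|\leq|\range|/2$ satisfies
\[
|\partial_{\range}A|\;\geq\;c\,\min\bigl(|A|^{(d-1)/d},\,N^{d-1}\bigr).
\]
Given this bound, a standard profile-based mixing time estimate (Morris--Peres evolving sets, or Lovász--Kannan) turns $\phi_{\range}(r)\geq cr^{-1/d}$ into $\mix(\range)\lesssim\int_{1}^{N^{d}}\frac{dr}{r\phi_{\range}(r)^{2}}\asymp N^2$, which is the required upper bound.

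The main obstacle is the isoperimetric inequality itself. The natural route is a renormalisation scheme coupling $\range$ with the random interlacements $\I^u$ at mesoscopic scales: fix a scale $L=(\log N)^\alpha$ and declare a box of side $L$ \emph{good} if the local configuration of $\range$ in it can be coupled with $\I^{u\pm\varepsilon}$ and if the resulting $\I^u$-configuration has the expected density, is locally well-connected, and supports the local isoperimetric inequality on its infinite cluster---all properties known for $\I^u$ at this scale. A union bound over the $(N/L)^d$ boxes, combined with the available decoupling estimates for $\range$ in the sublinear-time regime considered here, would give that every box is good with probability at least $1-Ce^{-(\log N)^2}$. Small sets ($|A|\leq L^d$) are then controlled within a single mesoscopic box using the $\I^u$-isoperimetry; large sets are handled by coarse-graining, in which good boxes form a subset of the coarse torus $(\Z/(N/L)\Z)^d$ inheriting the torus's own isoperimetric inequality, which together with the uniform positive density of $\range$ in each good box recovers the $N^{d-1}$ lower bound. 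The crossover regime $|A|\asymp L^d$, between microscopic and mesoscopic, is the subtlest point, and I would expect the stretched-exponential probability $1-Ce^{-(\log N)^2}$ in \eqref{eq:mixingtime} to be forced precisely by the coupling error at this scale.
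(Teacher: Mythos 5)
Your lower bound is fine, and it takes a genuinely different route from the paper: you bound the spectral gap from above with the test function $\cos(2\pi x_1/N)$ and use $\mix\ge c\,t_{\mathrm{rel}}$, whereas the paper uses the Barlow--Perkins heat-kernel estimate to exhibit a starting point $x$ for which the walk at time $\lfloor\varepsilon n^2\rfloor$ is still confined to $\ballZ(x,n)$, $n\le\varepsilon N$, while $\pi(\ballZ(x,n))$ is small. For your variance lower bound you do not need slice-by-slice concentration; positive density of $\range$ in two macroscopic sub-boxes (which the paper gets from the coupling with $\I^u$ and holds with probability $1-Ce^{-N^\delta}$) already gives $\mathrm{Var}_\pi(f)\ge c$, so this part goes through.

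The upper bound, however, has a genuine gap: the isoperimetric inequality you take as the core input, $|\partial_\range A|\ge c\min\bigl(|A|^{(d-1)/d},N^{d-1}\bigr)$ for \emph{all} $A$ with $|A|\le|\range|/2$, is too strong and should not be expected to hold with probability $1-Ce^{-(\log N)^2}$. The range contains "dangling" stretches of the trajectory whose $\ell^1$-neighborhood is never revisited by the rest of the walk; the cost of such a segment is only (stretched-)exponential in its capacity, and there are order $uN^d$ candidate segments, so with overwhelming probability one finds such segments of length growing like a power of $\log N$. For such a segment $A$ one has $|\partial_\range A|=O(1)$ while $|A|^{(d-1)/d}\to\infty$, contradicting your bound. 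This is precisely why the paper's Theorem~\ref{thm:isopineq:range} has the weaker form $|\partial_\range A|\ge\gamma|A|^{1-\frac1d+\frac1{d^2}}N^{-\frac1d}$: the deterministic input (Theorem~\ref{thm:isop:QKs}) gives the clean $|A|^{(d-1)/d}$ isoperimetry only for $|A|\ge L_s^{d(d+1)}$ with $L_s$ a small \emph{power} of $N$, and below that scale one can use nothing better than $|\partial_\range A|\ge1$ (Corollary~\ref{cor:isop}). For the same reason your plan to handle "small sets within a single mesoscopic box of side $(\log N)^\alpha$ using the $\I^u$-isoperimetry" cannot work: no such small-scale isoperimetric inequality for the interlacement cluster is available (or true). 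The conclusion is nevertheless salvageable without new ideas at the profile stage: plugging the weaker profile $\phi(r)\ge\gamma N^{-\frac1d}r^{-\frac{d-1}{d^2}}$ into the same Morris--Peres bound still gives $\int_1^{CN^d}\frac{dr}{r\phi(r)^2}\le CN^2$, which is exactly the paper's computation; so the fix is to weaken your key lemma to the paper's form and prove it via the Černý--Teixeira coupling together with the polynomial-scale renormalization of Sapozhnikov, rather than via polylogarithmic boxes.
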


The lower bound on $\mix(\range)$ of Theorem~\ref{thm:mixingtime} is not
difficult to show. In fact, its probability can be easily improved to
$\geq 1 - Ce^{-N^\delta}$. The substantial contribution of this note is
the upper bound on $\mix(\range)$ of correct order. Previously, it was
shown by Procaccia and Shellef in \cite[Theorem~2.2]{ProcacciaShellef} that
\[
  \lim_{N\to\infty}\P\left[\mix(\range)\leq N^2\log^{(k)}N\right]=1,
  \qquad \text{for every $k\ge 0$,}
\]
where $\log^{(k)}N$ is the $k$-th iterated logarithm. Our theorem on the
one hand sharpens their result, and on the other hand gives a bound on
the probability of the good event. The decay rate in \eqref{eq:mixingtime}
can be easily improved from $e^{-(\log N)^2}$ to any $e^{-(\log N)^p}$,
$p>2$, but our method does not allow to obtain a stretched exponential
rate $e^{-N^\delta}$.

\bigskip

The main ingredient of the proof of Theorem~\ref{thm:mixingtime} is the
following isoperimetric inequality for subsets of $\range$, which may be
of independent interest. For $A\subseteq \range$, let
\[
  \partial_\range A
  = \left\{\{x,y\}~:~ x\in A, y\in \range\setminus A, |x-y|_1=1\right\}
\]
be the edge boundary of $A$ in $\range$.

\begin{theorem}\label{thm:isopineq:range}
  Let $d\geq 3$, $u>0$, and $\mu\in(0,1)$.
  There exist $\gamma = \gamma(d,u,\mu)>0$ and $C = C(d,u,\mu)<\infty$ such that for all $N\in\N$,
  \begin{equation}\label{eq:isopineq:range}
    \mathbb P\left[\begin{array}{c}
        \text{for any $A\subset \range$ with $|A|\leq \mu|\range|$,}\\
        \text{$|\partial_\range A|\geq \gamma\cdot |A|^{1 - \frac{1}{d} + \frac{1}{d^2}}\cdot N^{-\frac{1}{d}}$}
    \end{array}\right]\geq 1 - Ce^{-(\log N)^2}.
  \end{equation}
\end{theorem}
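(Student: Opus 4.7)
The plan is a single‑scale coarse‑graining argument, in the spirit of the standard passage from bulk to cluster isoperimetry in supercritical percolation (Benjamini--Mossel, Mathieu--Remy, Berger--Biskup--Hoffman--Kozma), adapted to the range of the random walk on $\torus$. Fix $A\subseteq\range$ with $|A|\le\mu|\range|$, choose a mesoscopic scale $\ell=\ell(|A|)$ of order $|A|^{(d-1)/d^2}N^{1/d}$, partition $\torus$ into $\ell$-boxes, classify each box as \emph{full} or \emph{non-full} according to the local density of $A$ in $\range$, and lower bound $|\partial_\range A|$ either by a local in‑box isoperimetric inequality for non‑full boxes or by the standard isoperimetric inequality on the coarse‑grained torus applied to the set of full boxes. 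Choosing $\ell$ exactly balances the two contributions and produces the stated exponent $1-1/d+1/d^2$ together with the $N^{-1/d}$ factor.

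\textbf{Good event.} The main technical input is that on an event of probability at least $1-Ce^{-(\log N)^2}$, the range is \emph{locally regular at every mesoscopic scale}: there exist $0<c_1<c_2<\infty$ and a polylogarithmic $\ell_0=\ell_0(N)$ such that every $\Z^d$-box $Q\subset\torus$ of side $\ell\in[\ell_0,N]$ satisfies simultaneously (i) $c_1\ell^d\le|\range\cap Q|\le c_2\ell^d$ and (ii) $|\partial_\range B|\ge c_3\min\{|B|,|\range\cap Q|-|B|\}^{1-1/d}$ for every $B\subseteq\range\cap Q$. One proves this by coupling, for each box separately, the trace of $X$ on a slight enlargement of $Q$ with the trace of a random interlacement of intensity close to $u$, transferring the known density and isoperimetric properties of random interlacements, and taking a union bound over the $O(N^d)$ positions and $O(\log N)$ dyadic scales involved. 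The coupling must have error much smaller than $e^{-(\log N)^2}$ to survive the union bound; this is where the long‑time correlations of the walk on $\torus$ enter and where the analytical weight of the proof lies.

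\textbf{Combining the bounds.} WLOG $|A|\le|\range|/2$ (otherwise replace $A$ by $\range\setminus A$ and absorb a $\mu$-dependent constant into $\gamma$); WLOG $|A|\ge 1$ (the statement is trivial otherwise for $N$ large since the RHS is below $1$). With $\ell:=\lceil|A|^{(d-1)/d^2}N^{1/d}\rceil$, one checks $\ell\in[\ell_0,N]$. Fix a small $\delta>0$, partition $\torus$ into $\ell$-boxes, and call $Q$ \emph{full} if $|A\cap Q|>(1-\delta)|\range\cap Q|$, \emph{non-full} otherwise; split $A=A_{\mathcal F}\cup A_{\mathcal N}$ accordingly. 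If $|A_{\mathcal N}|\ge|A|/2$, then for each non‑full $Q$, using $\min\{|A\cap Q|,|\range\cap Q|-|A\cap Q|\}^{1-1/d}\ge c\,|A\cap Q|/\ell$ (since the minimum is at most $|A\cap Q|\le c_2\ell^d$), summing (ii) over such $Q$ yields $|\partial_\range A|\ge c\,|A_{\mathcal N}|/\ell\ge c\,|A|^{1-1/d+1/d^2}N^{-1/d}$. If $|A_{\mathcal F}|\ge|A|/2$, identify $\ell$-boxes with vertices of the coarse torus $(\Z/\lceil N/\ell\rceil\Z)^d$; by~(i) the set $\mathcal F$ of full boxes satisfies $|\mathcal F|\ge c|A|/\ell^d$ and is contained in at most half of the coarse torus, so standard torus isoperimetry produces $\ge c|\mathcal F|^{1-1/d}$ coarse‑adjacent pairs $(Q_1,Q_2)$ with $Q_1\in\mathcal F$ and $Q_2\notin\mathcal F$. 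Applying (ii) on $Q_1\cup Q_2$, each such pair contributes $\ge c\ell^{d-1}$ edges to $\partial_\range A$ (here $A\cap(Q_1\cup Q_2)$ and its complement in $\range\cap(Q_1\cup Q_2)$ are both $\ge c\ell^d$), and different pairs contribute edges lying across different coarse faces, so with only $O(1)$ overcounting $|\partial_\range A|\ge c\,\ell^{d-1}(|A|/\ell^d)^{1-1/d}=c\,|A|^{1-1/d}\ge c\,|A|^{1-1/d+1/d^2}N^{-1/d}$.

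\textbf{Main obstacle.} Establishing the good event, i.e.\ step~(ii) uniformly across all mesoscopic boxes and dyadic scales with deviation rate $e^{-(\log N)^2}$. The combinatorial step that converts (i)--(ii) into the isoperimetric bound is essentially standard; the technical work is setting up and controlling the coupling between the local trace of $X$ and an auxiliary random interlacement at the precision needed for the union bound.
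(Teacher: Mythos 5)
There is a genuine gap, and it lies in the ``good event'' itself, not merely in the difficulty of establishing it. Your hypothesis (ii) --- that on an event of probability $1-Ce^{-(\log N)^2}$ \emph{every} subset $B\subseteq\range\cap Q$ of every mesoscopic box satisfies $|\partial_\range B|\ge c_3\min\{|B|,|\range\cap Q|-|B|\}^{1-1/d}$ with a constant independent of $N$ --- is false. With high probability $\range$ contains ``tentacles'': bare straight trajectory segments of length of order $\log N$ all of whose lattice neighbours off the segment are vacant (the local cost is only $e^{-Ck}$ per site, so a second–moment argument over the $N^d$ positions produces such segments of length $\varepsilon\log N$ somewhere in the torus). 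Such a segment $B$ has $|\partial_\range B|=O(1)$ while $\min\{|B|,\cdot\}^{(d-1)/d}\asymp(\log N)^{(d-1)/d}\to\infty$, so every box containing one violates (ii); the good event has probability tending to $0$. A quick sanity check shows the same thing: if (ii) held you could take a single box of side comparable to $N$ and read off the clean bound $|\partial_\range A|\ge c\min(|A|,|\range\setminus A|)^{(d-1)/d}$, which is strictly stronger than the theorem and false. The weaker exponent $1-\frac1d+\frac1{d^2}$ and the factor $N^{-1/d}$ are not an artifact of the balancing in your two-case argument; they are forced by the fact that $(d-1)/d$-isoperimetry for $\range$ is only available \emph{above a size threshold}, and the theorem interpolates between that and the trivial bound $|\partial_\range A|\ge1$ below the threshold.

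This is exactly how the paper proceeds: the deterministic input (Theorem~\ref{thm:isop:QKs}, from \cite{Sapozhnikov14}, verified for $\range$ via the coupling with random interlacements) gives $|\partial_{\cemax}A|\ge\gamma|A|^{(d-1)/d}$ only for $|A|\ge L_s^{d(d+1)}$, small sets are handled by $|\partial|\ge1$ (Corollary~\ref{cor:isop}), and the scales are tuned so that $L_s^{d^3+1}\approx N$, which makes the threshold $L_s^{d(d+1)}\approx N^{d/(d^2-d+1)}$ match precisely the regime where the trivial bound suffices; the inequality is then applied at the single macroscopic scale $KL_s\approx N/7$ via $7^d$ overlapping enlargements $\cemax_{x_i}$ covering the torus, with a short case analysis when some $A\cap\cemax_{x_i}$ exceeds half of $\cemax_{x_i}$. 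Your scheme cannot be repaired just by inserting a size threshold $T$ into (ii): the non-full-box half of your argument applies (ii) to $A\cap Q$, which may consist of arbitrarily small (tentacle-like) pieces in every box, and discarding the below-threshold boxes loses control of $|A|$ in an intermediate polynomial range (roughly $N^{d/(d^2-d+1)}\lesssim|A|\lesssim(TN^{d-1})^{d/(2d-1)}$), where neither the trivial bound nor your coarse-grained estimate applies. Choosing the scale $\ell$ as a function of $|A|$ does not resolve this; the paper's fixed macroscopic scale, with the threshold built into the deterministic isoperimetric input, is what closes that gap.
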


Theorem~\ref{thm:isopineq:range} is proved by combining a new
isoperimetric inequality for (deterministic) graphs from
\cite{Sapozhnikov14} with the strong coupling of
the $\range$ and the random interlacements from \cite{CernyTeixeira14}.
We will recall these results in Section~\ref{sec:inputs}.
In the remaining two sections of this note we then prove
Theorem~\ref{thm:isopineq:range} and Theorem~\ref{thm:mixingtime},
respectively.

In the remainder of this note, we omit the dependence of various
constants on $d$. The constants inherit their numbers from the theorems
where they appear for the first time, and their dependence on other
parameters is explicitly mentioned.

\section{Preliminaries} 
\label{sec:inputs}

We introduce some notation first. For $x=(x_1,\dots,x_d)\in \R^d$, its
$\ell^1$ and $\ell^\infty$ norms are defined by
$|x|_1 = \sum_{i=1}^d|x_i|$ and $|x|_\infty = \max\{|x_1|,\ldots|x_d|\}$,
respectively. For $x \in \Z^d$ and $r>0$, we denote by
$\ballZ(x,r) = \{y\in\Z^d~:~|x-y|_\infty\leq \lfloor r \rfloor \}$ the
closed $\ell^{\infty}$-ball in $\Z^d$ with radius $\lfloor r \rfloor$ and
center at $x$. For two subsets of $\Z^d$, $A\subseteq B$, we denote the
boundary of $A$ in $B$ by
\[
\partial_B A = \left\{\{x,y\}~:~x\in A,~y\in B\setminus A,~|x-y|_1 = 1\right\}.
\]

We consider the measurable space $\Omega = \{0,1\}^{\Z^d}$, $d\geq 3$,
equipped with the $\sigma$-algebra $\mathcal F$ generated by the coordinate
maps $\{\atom\mapsto\atom(x)\}_{x\in\Z^d}$. For any
$\atom\in\{0,1\}^{\Z^d}$, we denote the induced subset of $\Z^d$ by
\[
  \set = \set(\atom) = \{x\in\Z^d~:~\atom(x) = 1\} \subseteq \Z^d .\
\]
We view $\set$ as a subgraph of $\Z^d$ in which the edges are drawn
between any two vertices of $\set$ within $\ell^1$-distance $1$ from each
other.

\subsection{Deterministic isoperimetric inequality}

One of the main tools for our proofs is an isoperimetric inequality from \cite{Sapozhnikov14}
for subsets of a (deterministic) graph, satisfied uniformly over a large class of graphs.
Each graph in this class is contained in a large box, well-connected on a mesoscopic scale,
and admits a dense well-structured connected subset identified through a multiscale renormalization scheme.
In this section we recall some notation and necessary results from \cite{Sapozhnikov14}.

Let $\lscale$ and $L_0$ be positive integers. For $n\ge 0$
we consider the following sequences of scales
\begin{equation} \label{def:scales}
  l_n = \lambda^2\, 4^{n^2}, \qquad r_n = \lambda\, 2^{n^2},\qquad L_{n+1}
  = l_n\, L_n.
\end{equation}
To each $L_n$ we associate the rescaled lattice
\[
  \GG_n = L_n\cdot \Z^d = \left\{L_n\cdot x ~:~ x\in\Z^d\right\} ,\
\]
with edges between any pair of $\ell^1$-nearest neighbor vertices of
$\GG_n$. Let
$\eta = (\eta_1,\eta_2)$ be an ordered pair of real numbers satisfying
\begin{equation}\label{eq:etas}
\eta_1\in(0,1),\quad \eta_1 \leq \eta_2 <2\, \eta_1.
\end{equation}

To set up a multiscale renormalization with scales $L_n$, we introduce two families of good vertices.
Their precise definition will not be used in the paper. The reader may skip directly to the statement of Theorem~\ref{thm:isop:QKs}.

\begin{definition}
  We say that $x\in\GG_0$ is \emph{$(0a)$-good} in configuration
  $\omega\in\Omega$ if
    for each $y\in\GG_0$ with $|y-x|_1 \leq L_0$, the set
    $\set\cap(y+[0,L_0)^d)$ contains a connected component $\mathcal C_y$
    with at least $\eta_1 L_0^d$ vertices such that
    for all $y\in\GG_0$ with $|y-x|_1 \leq L_0$, $\mathcal C_y$ and
    $\mathcal C_x$ are connected in
    $\set \cap((x+[0,L_0)^d)\cup(y+[0,L_0)^d))$.

  If $x\in\GG_0$ is not \emph{$(0a)$-good}, then we call it \emph{$(0a)$-bad}.
  For $n\geq 1$, we recursively define $x\in\GG_n$ to be $(na)$-\emph{bad}
  in $\omega\in\Omega$ if there exist two $((n-1)a)$-bad vertices
  $x_1,x_2\in \GG_{n-1}\cap(x+[0,L_n)^d)$ with
  $|x_1 - x_2|_\infty \geq r_{n-1} L_{n-1}$. Otherwise, we call the
  vertex $x$ $(na)$-\emph{good}.
\end{definition}

\begin{definition}
  We say that $x\in\GG_0$ is $(0b)$-\emph{good} in configuration
  $\omega\in\Omega$ if
  \begin{equation*}
      \left|\set\cap(x+[0,L_0)^d)\right| \leq \eta_2 L_0^d.
    \end{equation*}

  If $x\in\GG_0$ is not $(0b)$-\emph{good}, then we call it $(0b)$-\emph{bad}.
  For $n\geq 1$, we recursively define $x\in\GG_n$ to be $(nb)$-\emph{bad}
  in $\omega\in\Omega$ if there exist two $((n-1)b)$-bad vertices
  $x_1,x_2\in \GG_{n-1}\cap(x+[0,L_n)^d)$ with
  $|x_1 - x_2|_\infty \geq r_{n-1}\, L_{n-1}$. Otherwise, we call the
  vertex $x$ $(nb)$-\emph{good}.
\end{definition}

\begin{definition}
  For $n\geq 0$, we say that $x\in\GG_n$ is \emph{$n$-good} in
  configuration $\omega\in\Omega$ if it is at the same time $(na)$ and
  $(nb)$ good. Otherwise, we call the vertex $x$ $n$-\emph{bad}.
\end{definition}

Let us briefly comment on the above definitions.
In classical renormalization techniques on percolation clusters, good boxes are
usually defined as the ones containing a unique cluster with large diameter.
In our case, it is crucial to define good boxes in terms of only monotone events,
as for these one can get a good control of correlations with a help of sprinkling, even in
models with polynomial decay of correlations (see \cite{SznitmanAM, Sznitman:Decoupling, PopovTeixeira}).

The main motivation behind the choice of $\eta_2<2\eta_1$ comes from the following observation.
If neighbors $x,y\in\GG_0$ are $0$-good, then $\mathcal C_x$ and $\mathcal C_y$ are defined uniquely and locally connected
(see \cite[Lemma~3.1]{Sapozhnikov14}).
This property is essential to identify a ubiquitous well-structured connected subset of $\set$ through a multiscale renormalization.
(See \cite{RS:Disordered, DRS12, PRS, Sapozhnikov14}.)

\def\cemax{{\widetilde{\mathcal C}}}

The following statement is a special case of \cite[Lemma~3.3 and Theorem~3.8]{Sapozhnikov14}.
It gives an isoperimetric inequality for subsets of a local enlargement of $\set\cap[0,KL_s)^d$.
This enlargement serves as a smoothening of a possibly rough boundary of $\set\cap[0,KL_s)^d$,
thus improving isoperimetric properties of $\set\cap[0,KL_s)^d$ near its boundary.
\begin{theorem}\label{thm:isop:QKs}
  Let $d\geq 3$ and $\eta $ as above. There exist $C = C(\eta )<\infty$,
  $\beta = \beta(\eta )>0$,
  and $\gamma=\gamma(\eta )>0$ such that for all $\lambda\geq C$, $L_0\geq 1$,
  $s\geq 0$, and  $K\geq 2 L_s^d$, the following statement holds: If $\omega\in\Omega$
  satisfies
  \begin{enumerate}[(a)]
    \item
    all the vertices in $\GG_s\cap [-2L_s,(K+2)L_s)^d$ are $s$-good,
    \item
    all $x,y\in\set\cap[0,KL_s)^d$ with $|x-y|_\infty\leq L_s$ are
    connected in $\set\cap\ballZ(x,2L_s)$,
    \item
    for every $x\in[0,KL_s)^d$ such that $(x+[0,L_s)^d)\subset [0,KL_s)^d$,
    $\set\cap(x+[0,L_s)^d)\neq \emptyset$,
  \end{enumerate}
  then
  \[
    |\set\cap [0,KL_s)^d|\geq \beta\cdot  (KL_s)^d,
  \]
  the set
  \[
  \cemax = \left\{x\in\set~:~\text{$x$ is connected to some $y\in\set\cap[0,KL_s)^d$ by a path in $\set\cap\ballZ(y,2L_s)$}\right\}
  \]
is connected, and for all $A\subset \cemax$ with $L_s^{d(d+1)}\leq |A|\leq \frac12\, |\cemax|$,
\[
|\partial_\cemax A| \geq \gamma\cdot |A|^{\frac{d-1}{d}}.
\]
\end{theorem}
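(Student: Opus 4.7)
The plan is to prove the three conclusions separately via the multiscale renormalization strategy of \cite{Sapozhnikov14}, converting the counting of bad vertices into the volume lower bound and the connectivity of $\cemax$, and then running a single-scale coarse-graining at scale $L_s$ to obtain the isoperimetric inequality.

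First, for the volume lower bound and the connectivity, I would exploit the recursive definition of $s$-goodness. Under hypothesis~(a), at each scale $k\leq s$ the set of $k$-bad vertices in $\GG_k\cap[0,KL_s)^d$ decomposes into clusters of $\ell^\infty$-diameter at most $r_kL_k$, and a geometric summation shows that the total fraction of $L_0$-cells tiling $[0,KL_s)^d$ that fail to be $(0a)$- or $(0b)$-good is at most $\sum_{k\geq 0}(r_k/L_k)^d\ll 1$ once $\lscale$ is taken large. Each $(0a)$-good cell contributes at least $\eta_1 L_0^d$ vertices to $\set$, which yields $|\set\cap[0,KL_s)^d|\geq\beta(KL_s)^d$, and the definition of $(0a)$-goodness simultaneously glues the clusters $\mathcal C_x$ of neighboring $(0a)$-good cells into a single $\set$-component $\mathcal C\subseteq\set\cap[0,KL_s)^d$. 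Hypothesis~(b) then guarantees that every vertex of $\set\cap[0,KL_s)^d$ not already in $\mathcal C$ is connected to $\mathcal C$ by a path in a $2L_s$-ball, so $\cemax$ absorbs them and is connected.

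For the isoperimetric inequality, fix $A\subset\cemax$ with $L_s^{d(d+1)}\leq|A|\leq\tfrac12|\cemax|$ and coarse-grain at scale $L_s$. Partition $[0,KL_s)^d$ into the cells $Q_x=x+[0,L_s)^d$ with $x\in\GG_s\cap[0,KL_s)^d$, fix a small $\alpha=\alpha(\eta)>0$, and call $x$ \emph{full} if $|A\cap Q_x|\geq\alpha L_s^d$ and \emph{empty} otherwise. The empty cells collectively contain at most $\alpha(KL_s)^d$ of $A$, so the set $F\subset\GG_s$ of full cells satisfies $|F|L_s^d\gtrsim|A|$; combined with the volume bound and $|A|\leq\tfrac12|\cemax|$, one also obtains $|F|\leq(1-c)K^d$. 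The classical discrete isoperimetric inequality in $\Z^d$ applied to $F$ then yields $|\partial_{\Z^d}F|\gtrsim|F|^{(d-1)/d}$. For each edge $\{x,y\}\in\partial_{\Z^d}F$, the $s$-good structure supplies a large connected $\set$-cluster spanning $Q_x\cup Q_y$, and a Poincaré-type bound inside this cluster converts the density drop (from $\geq\alpha$ on the full side to $<\alpha$ on the empty side) into $\gtrsim L_s^{d-1}$ edges of $\partial_\cemax A$ near the interface. Summing produces
\[
  |\partial_\cemax A|\gtrsim L_s^{d-1}\,|F|^{\frac{d-1}{d}}\gtrsim|A|^{\frac{d-1}{d}}.
\]

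The main obstacle is precisely this local conversion step: showing that every density drop across a pair of neighboring $L_s$-cells forces $\sim L_s^{d-1}$ genuine $\set$-edges to appear in $\partial_\cemax A$. This is where the full inductive structure of $s$-good vertices must be unpacked, ultimately reducing to the base scale $L_0$ where the clusters $\mathcal C_x$ are defined explicitly and where the uniqueness and local connectivity made possible by $\eta_2<2\eta_1$ (cf.\ \cite[Lemma~3.1]{Sapozhnikov14}) are indispensable. The hypothesis $|A|\geq L_s^{d(d+1)}$ is the price one pays for performing the coarse-graining at a single scale rather than iteratively: it forces $|F|\geq L_s^{d^2}$, which guarantees that the classical isoperimetric term dominates both the error produced by empty cells and the rough geometry near $\partial[0,KL_s)^d$ that the enlargement $\cemax$ is designed to smoothen.
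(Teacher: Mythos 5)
There is a genuine gap, and it sits exactly where the work is. First, a point of reference: the paper does not prove this theorem at all; it is imported as a special case of \cite[Lemma~3.3 and Theorem~3.8]{Sapozhnikov14}, with the remark only checking that hypotheses (b)--(c) make $\cemax$ coincide with the set used there. So your sketch is measured against the multiscale proof in \cite{Sapozhnikov14}, and the decisive step of that proof is precisely the one you defer. Your ``local conversion'' step --- that a density drop of $A$ across two adjacent $L_s$-cells forces of order $L_s^{d-1}$ edges of $\partial_\cemax A$ --- is not a Poincar\'e-type afterthought: it is an isoperimetric inequality for $\set$ inside a box of side comparable to $L_s$, i.e.\ the very statement being proved, one scale down. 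There is no way to obtain it from the definition of $s$-good vertices without running an induction over the scales $L_0,L_1,\dots,L_s$ (or an equivalent hierarchical surgery around the bad regions), and your proposal neither sets up such an induction nor indicates the inductive statement; it simply acknowledges that ``the full inductive structure must be unpacked''. That is the theorem, not a step of it.

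Second, the single-scale bookkeeping around full/empty cells is quantitatively wrong. The theorem allows $K\geq 2L_s^d$ to be arbitrarily large (in the paper's application $K\approx L_s^{d^3}$) while $|A|$ may be as small as $L_s^{d(d+1)}$, so the bound ``the empty cells collectively contain at most $\alpha(KL_s)^d$ of $A$'' is vacuous: $\alpha(KL_s)^d$ can exceed $|A|$ by a large power of $L_s$, and neither $|F|L_s^d\gtrsim|A|$ nor $|F|\geq L_s^{d^2}$ follows. Indeed $A$ may have density below $\alpha$ in every $L_s$-cell, in which case $F=\emptyset$ and your coarse-grained isoperimetric inequality gives nothing; handling this regime requires per-cell relative isoperimetric bounds inside the cluster of each cell, which is again the unproved scale-$L_s$ statement. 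Relatedly, with a small absolute threshold $\alpha$ the claimed bound $|F|\leq(1-c)K^d$ does not follow from $|A|\leq\frac12|\cemax|$, since full cells can also carry most of $\cemax\setminus A$; one needs a density threshold measured relative to the cluster in the cell (and then a relative, not free, isoperimetric inequality for $F$ in $[0,K)^d$, since boundary edges of $F$ exiting the box produce no interface inside $\cemax$). The first half of your sketch (volume lower bound and connectivity of $\cemax$ from the hierarchical confinement of bad vertices plus hypotheses (b)--(c)) is the right picture, but as it stands the isoperimetric part of the proposal does not constitute a proof.
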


\begin{remark}
\begin{enumerate}[(a)]
\item
There is a small difference between our definition of $0$-good vertices
and that of \cite[Section~3.1]{Sapozhnikov14}, where $\set_{L_0}$ is used instead of $\set$.
($\set_{L_0}$ is the set of vertices from $\set$ that belong to connected components of diameter at least $L_0$.)
In this note, we only consider connected $\set$, thus $\set_{L_0} = \set$.
\item
All the conditions on the scales $l_n$ and $r_n$ in \cite[Lemma~3.3 and Theorem~3.8]{Sapozhnikov14} are satisfied if $\sum_{j=0}^\infty\frac{r_j}{l_j}$ is sufficiently small.
This can be achieved by making $\lambda$ large enough.
\item
Assumptions (a) and (b) in the statement of Theorem~\ref{thm:isop:QKs} are identical to those in \cite[Definition~3.7]{Sapozhnikov14}.
An additional assumption (c) is imposed so that (b) and (c) imply that
$\cemax$ is connected and coincides with $\cemax_{K,s,L_0}$ from \cite[Definition~3.5]{Sapozhnikov14}.
\end{enumerate}
\end{remark}

Theorem~\ref{thm:isop:QKs} controls the size of the boundary of sets
larger than $L_s^{d(d+1)}$. For smaller sets, the following corollary will
be useful.

\begin{corollary}
  \label{cor:isop}
  In the setting of Theorem~\ref{thm:isop:QKs}, assume in addition that
  $K\geq L_s^{d^3}$. Then for every $\omega\in\Omega $ satisfying (a)--(c) of Theorem~\ref{thm:isop:QKs} and
  for all $A\subset {\cemax}$ with $|A|\leq \frac12\,{|\cemax|}$,
\[
|\partial_{\cemax} A| \geq \gamma|A|^{1-\frac 1d + \frac{1}{d^2}}((K+4)\,L_s)^{-\frac{1}{d}}.
\]
\end{corollary}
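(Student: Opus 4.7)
The plan is to split on the size of $A$ at the threshold $|A| = L_s^{d(d+1)}$, which is the lower bound imposed in Theorem~\ref{thm:isop:QKs}. For sets above the threshold the theorem applies directly, and for sets below it the connectedness of $\cemax$ already suffices; the role of the new hypothesis $K\geq L_s^{d^3}$ is to make these two bounds match at the same constant.

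For $L_s^{d(d+1)} \leq |A| \leq \frac12 |\cemax|$, Theorem~\ref{thm:isop:QKs} gives $|\partial_{\cemax} A|\geq \gamma |A|^{(d-1)/d}$. Writing
\[
|A|^{(d-1)/d} = |A|^{1-\frac1d+\frac{1}{d^2}} \cdot |A|^{-1/d^2},
\]
I would trade the extra factor against a crude volume estimate: since $\cemax \subseteq [-2L_s,(K+2)L_s)^d$, one has $|A|\leq |\cemax|\leq ((K+4)L_s)^d$, which yields $|A|^{-1/d^2}\geq ((K+4)L_s)^{-1/d}$, and the claim follows.

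For $1\leq |A| < L_s^{d(d+1)}$, I would instead use that $\cemax$ is connected (part of the conclusion of Theorem~\ref{thm:isop:QKs}) and that $|A|<|\cemax|$, to deduce $|\partial_{\cemax} A|\geq 1$. It then suffices to check that $|A|^{1-\frac1d+\frac{1}{d^2}}((K+4)L_s)^{-1/d}\leq 1$. Using the factorisation $(d+1)(d^2-d+1)=d^3+1$ one computes $d(d+1)\bigl(1-\frac1d+\frac{1}{d^2}\bigr) = d^2+\frac1d$, so that $|A|^{1-\frac1d+\frac{1}{d^2}}\leq L_s^{d^2+1/d}$; the hypothesis $K\geq L_s^{d^3}$ gives $((K+4)L_s)^{1/d}\geq L_s^{(d^3+1)/d}=L_s^{d^2+1/d}$, and the two combine to the required inequality. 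Taking the constant in the corollary to be $\min(\gamma,1)$ then closes both cases simultaneously.

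No real obstacle is expected. The only point that requires care is the exponent arithmetic in the small regime, which is exactly calibrated by the assumption $K\geq L_s^{d^3}$; the constant $\gamma$ of Theorem~\ref{thm:isop:QKs} transfers through to the corollary essentially unchanged.
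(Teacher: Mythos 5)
Your proposal is correct and follows essentially the same route as the paper: split at the threshold $|A|=L_s^{d(d+1)}$, use Theorem~\ref{thm:isop:QKs} together with $|A|\leq|\cemax|\leq((K+4)L_s)^d$ in the large regime, and use connectedness of $\cemax$ plus the exponent identity $d(d+1)\bigl(1-\tfrac1d+\tfrac1{d^2}\bigr)=\tfrac{d^3+1}{d}$ with $K\geq L_s^{d^3}$ in the small regime. Your explicit remark about replacing $\gamma$ by $\min(\gamma,1)$ is a small point the paper leaves implicit, but otherwise the arguments coincide.
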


\begin{proof}
  For $A$ with
  $L_s^{d(d+1)} \le |A| \le \frac12\,|\cemax|\le
  ((K+4)L_s)^d$, the claim follows from Theorem~\ref{thm:isop:QKs}. On the other hand,
  for $A$ with $|A|\leq L_s^{d(d+1)}$,
  \[
    |\partial_\cemax A|\geq 1 =
    \frac{L_s^{d(d+1)(1-\frac1d + \frac{1}{d^2})}}
    {L_s^{(d^3+1)\frac{1}{d}}}
    \geq |A|^{1-\frac 1d + \frac{1}{d^2}} ((K+4)L_s)^{-\frac{1}{d}},
  \]
  as required.
\end{proof}

\subsection{Coupling with random interlacements}

Another principal ingredient for the proofs of our main results is the
coupling of $\range$ with the random interlacements inside of macroscopic
subsets of the torus which was constructed in \cite{CernyTeixeira14}. We use
here $\I^u$ to denote the random interlacements on $\mathbb Z^d$ at level
$u$ as introduced in \cite[(1.53)]{SznitmanAM}. In the next theorem, we
identify the torus $\torus$ with the set $[0,N)^d\cap \mathbb Z^d$.

\begin{theorem}\label{thm:coupling}
  \cite[Theorem~4.1]{CernyTeixeira14} Let $d\geq 3$, $u>0$. For any $\varepsilon>0$
  and $\alpha\in(0,1)$, there exist $\delta_{\ref{thm:coupling}}>0$,
  $C_{\ref{thm:coupling}}<\infty$, and a coupling $\mathbb Q$ of $\range$,
  $\I^{u(1-\varepsilon)}$, and $\I^{u(1+\varepsilon)}$, such that for all
  $N\geq 1$,
  \[
    \mathbb Q\left[\I^{u(1-\varepsilon)}\cap[0,\alpha N]^d
      \subseteq\range\cap[0,\alpha N]^d
      \subseteq \I^{u(1+\varepsilon)}\right]
    \geq 1 - C_{\ref{thm:coupling}}e^{-N^{\delta_{\ref{thm:coupling}}}}.
  \]
\end{theorem}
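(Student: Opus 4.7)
The plan is to realize both the trace $\range\cap[0,\alpha N]^d$ and the trace of the random interlacements $\I^{u(1\pm\varepsilon)}$ on $[0,\alpha N]^d$ as images of collections of \emph{excursions} across a pair of nested concentric boxes, and then couple the two collections of excursions trajectory by trajectory. Concretely, I would fix a slightly larger box $B' = [-\beta N, (\alpha+\beta) N]^d$ with $\beta \in (0,\tfrac{1-\alpha}{2})$ small, let $B=[0,\alpha N]^d$, and cut the SRW path $X_0,\dots,X_{\lfloor uN^d\rfloor}$ into successive excursions from $\partial B$ to $\partial B'$ together with the intervening trajectories. The trace of $\range$ in $B$ is the union of the images of these excursions. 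On the other side, the restriction of $\I^u$ to $B$ admits an explicit description as the image of a Poisson point process of excursions from $\partial B$ to $\partial B'$, with total intensity $u\cdot\mathrm{cap}(B)$ times a normalization, starting points distributed as the normalized equilibrium measure $e_B/\mathrm{cap}(B)$, and transitions given by SRW on $\Z^d$ conditioned to be killed before returning to $B$ after hitting $\partial B'$.

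First I would count excursions. Because the SRW on $\torus$ has relaxation time of order $N^2$ and $uN^d\gg N^2$, an ergodic/occupation-time argument gives concentration of the number $N_{\mathrm{exc}}$ of excursions up to time $uN^d$ around its mean $\approx u\cdot\mathrm{cap}(B)\cdot (1+o(1))$, with Gaussian-type deviation estimates yielding a probability bound of the form $1-Ce^{-N^{\delta}}$. Second I would address the law of the sequence of starting points of the excursions: the time between two successive excursions is of order $N^2\log N$ with high probability (visit to $\partial B$ from $\partial B'$ on the torus), which exceeds the mixing time, so consecutive starting points are approximately i.i.d.\ with the equilibrium-type distribution $e_B/\mathrm{cap}(B)$, up to total-variation error of stretched-exponential size. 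The actual matching of the two excursion sequences would then be carried out by the soft local times method of Popov--Teixeira, which produces a joint coupling on a common space realizing both an $N_{\mathrm{exc}}$-sample from the SRW-induced law and a Poisson$(u'\cdot\mathrm{cap}(B))$-sample from the interlacement law, and which succeeds whenever the two empirical densities agree well enough on the state space of starting points $\partial B$.

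The $(1\pm\varepsilon)$-sprinkling is what makes this feasible: by taking the interlacement intensity to be $u(1-\varepsilon)$ on one side and $u(1+\varepsilon)$ on the other, a buffer of $\varepsilon\cdot u\cdot\mathrm{cap}(B)$ excursions is available on both sides to absorb the (small) discrepancy between the actual number of SRW excursions and the Poisson target, as well as the small total-variation error in the starting-point distributions. Once the excursions are matched by their starting points, the subsequent trajectories up to the hit of $\partial B'$ have the same law (SRW on $\Z^d$ up to exit from $B'$) and can be coupled identically, so the traces inside $B$ coincide on the high-probability event.

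The main obstacle is to implement each of the three quantitative inputs—concentration of $N_{\mathrm{exc}}$, total-variation bound on the law of the ordered sequence of entry points, and the soft local times matching—simultaneously at the stretched-exponential rate $1-Ce^{-N^{\delta}}$. In particular, the total-variation bound requires a \emph{quenched} control of the Markov chain of entry points (not just marginal mixing), which is delicate because its transition kernel is built from the SRW on $\torus$ restricted between excursions; the standard route is to couple, after each excursion, the SRW on $\torus$ with a stationary copy before the next entrance into $B$, using the $N^2$-order mixing time plus a union bound over $\sim N^d$ excursions. I would expect this bookkeeping and the careful choice of the intermediate excursion scale $\beta N$ (large enough for decoupling, small enough not to eat into $\alpha$) to be where most of the technical work lies.
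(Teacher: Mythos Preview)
The paper does not prove this statement at all: Theorem~\ref{thm:coupling} is simply quoted from \cite[Theorem~4.1]{CernyTeixeira14} as an external input, with no argument given. There is therefore no ``paper's own proof'' to compare against.

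That said, your sketch is in the right spirit for how the cited result is obtained. The excursion decomposition between nested boxes, the concentration of the excursion count around $u\cdot\mathrm{cap}(B)$, the use of $(1\pm\varepsilon)$-sprinkling to absorb fluctuations, and the soft local times coupling of entry-point sequences are indeed the ingredients in \cite{CernyTeixeira14}. One correction: the law of an excursion from $\partial B$ to $\partial B'$ is \emph{not} quite the same for the torus walk and for the $\Z^d$ walk, since on the torus the excursion can wrap around before exiting $B'$; the coupling in \cite{CernyTeixeira14} handles this by making $\beta$ small enough (so the box $B'$ fits inside the torus) and by controlling the total-variation distance between the two excursion laws, not by declaring them identical. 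Your identification of the quenched control of the entry-point chain as the delicate step is accurate. But for the purposes of the present paper, none of this is needed---the theorem is taken as a black box.
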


\section{Proof of the isoperimetric inequality} 

We may now proceed to the proof of Theorem~\ref{thm:isopineq:range}. To
this end, we need to check that assumptions (a)--(c) of
Theorem~\ref{thm:isop:QKs} hold true with high probability.
We fix $u>0$ and consider the function
\[
  \eta(u) = 1 - e^{-\frac{u}{g(0,0)}},
\]
where $g(\cdot,\cdot)$ is the Green function of the simple random walk on
$\Z^d$. (The function $\eta(u)$ is the density of random interlacements
  at level $u$.) We further fix $\varepsilon >0$ small enough so that
\begin{equation}
  \label{eq:etau}
  \eta_1 := \frac34\eta (u(1-\varepsilon ))
  \text{ and }
  \eta_2 := \frac54\eta (u(1+\varepsilon ))
  \text{ satisfy condition~\eqref{eq:etas}}.
\end{equation}
The first lemma provides an estimate on the probability that a vertex
is $s$-bad. Its proof relies on corresponding results for random
interlacements \cite[Lemmas~4.2 and 4.4]{DRS12} and the coupling from
Theorem~\ref{thm:coupling}. In its statement, we consider $\range$ as a
subset of $\Z^d$ obtained by the canonical periodic embedding of $\torus$
in $\Z^d$.

\begin{lemma}\label{l:Guxk}
  For any $u>0$, $\alpha\in(0,1)$, and $\varepsilon$ as in \eqref{eq:etau}, there exist
  $C_{\ref{l:Guxk}} = C_{\ref{l:Guxk}}(u,\varepsilon,\alpha)<\infty$ and
  $C_{\ref{l:Guxk}}' = C_{\ref{l:Guxk}}'(u,\varepsilon,\alpha,\lscale)<\infty$
  such that for all $\lscale\geq C_{\ref{l:Guxk}}$,
  $L_0\geq C_{\ref{l:Guxk}}'$, and $s\geq 0$ with $L_s+2L_0\leq \alpha N$,
  \[
    \mathbb P\left[\text{$0$ is $s$-bad in
        $\range$}\right] \leq 2\cdot 2^{-2^s}
    + C_{\ref{thm:coupling}} e^{-N^{\delta_{\ref{thm:coupling}}}}.
  \]
\end{lemma}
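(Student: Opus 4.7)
The plan is to transfer the analogous estimate for random interlacements, \cite[Lemmas~4.2 and~4.4]{DRS12}, to $\range$ using the sandwich coupling of Theorem~\ref{thm:coupling}. Concretely, I would apply Theorem~\ref{thm:coupling} with the given $\alpha$ and $\varepsilon$ to produce a joint law of $\range$, $\I^{u(1-\varepsilon)}$, and $\I^{u(1+\varepsilon)}$ under which, except on an event of probability at most $C_{\ref{thm:coupling}}e^{-N^{\delta_{\ref{thm:coupling}}}}$,
\[
  \I^{u(1-\varepsilon)}\cap[0,\alpha N]^d \subseteq \range\cap[0,\alpha N]^d \subseteq \I^{u(1+\varepsilon)}.
\]
The hypothesis $L_s+2L_0\leq\alpha N$ is tailored so that the region on which the event ``$0$ is $s$-bad in $\range$'' depends --- a box of side of order $L_s+L_0$ around $0$ --- fits inside $[0,\alpha N]^d$, after using the periodicity of the canonical embedding of $\torus$ in $\Z^d$ to translate $0$ away from the boundary of the coupling box.

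The crux is then a monotonicity observation: the event that $x\in\GG_n$ is $(na)$-good is monotone increasing in $\set$, while the event that $x$ is $(nb)$-good is monotone decreasing. I would establish this by induction on $n$. The only non-routine case is $(0a)$: although ``contains a connected component of size $\geq\eta_1 L_0^d$'' is not a priori monotone, a connected component of $\set$ of this size is a connected subset of any $\set'\supseteq\set$, and hence is contained in a connected component of $\set'$ of size $\geq\eta_1 L_0^d$; moreover every path witnessing the required local connection between such components persists in $\set'$. The monotonicity of $(0b)$-goodness is immediate from its volume definition, and the inductive step for $n\geq 1$ follows at once because $(na)$-badness is defined through the existence of two far-apart $((n-1)a)$-bad vertices.

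Combining these steps, on the coupling-success event the inclusion $\I^{u(1-\varepsilon)}\subseteq\range$ inside the box implies that $(sa)$-goodness of $0$ in $\I^{u(1-\varepsilon)}$ with parameter $\eta_1$ is inherited by $\range$, and the inclusion $\range\subseteq\I^{u(1+\varepsilon)}$ transports $(sb)$-goodness from the upper interlacement to $\range$. The choice of $\eta_1,\eta_2$ in~\eqref{eq:etau} places them strictly on the correct side of the interlacement densities $\eta(u(1\mp\varepsilon))$, so \cite[Lemmas~4.2 and~4.4]{DRS12} apply and give the bound $2^{-2^s}$ for each of the two interlacement probabilities, provided $\lambda$ and $L_0$ are taken sufficiently large in terms of $u,\varepsilon,\alpha$. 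A union bound over $(sa)$- and $(sb)$-badness together with the coupling-failure probability then yields the announced estimate. The main (minor) obstacle is the careful verification of the monotonicity of $(0a)$-goodness just discussed, since its definition uses the non-monotone notion of a connected component; a secondary bookkeeping point is the localization of the $s$-bad event, which is precisely what forces the form of the hypothesis on $L_s+2L_0$.
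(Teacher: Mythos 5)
Your proposal is correct and follows essentially the same route as the paper: localize the $s$-bad event to the box $[-L_0,L_s+L_0]^d$ (which fits by the hypothesis $L_s+2L_0\le\alpha N$), apply the sandwich coupling of Theorem~\ref{thm:coupling}, use that $(sa)$-goodness is increasing and $(sb)$-goodness is decreasing in $\set$ to transfer badness to $\I^{u(1-\varepsilon)}$ and $\I^{u(1+\varepsilon)}$ respectively, and invoke \cite[Lemmas~4.2 and~4.4]{DRS12}. The only difference is that you spell out the monotonicity verification (correctly), which the paper merely asserts.
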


\begin{proof}[Proof of Lemma~\ref{l:Guxk}]
  Observe first that the event $\left\{\text{$0$ is $s$-bad in $\range$}\right\}$ depends only on the state of vertices inside of
  $B:=[-L_0,L_s+L_0]^d\cap \mathbb Z^d$. By assumption,
  $L_s+2L_0\le \alpha N$. Thus, using Theorem~\ref{thm:coupling}, we can
  couple $\range$ with $\I^{u(1\pm\varepsilon)}$ so that
  \[
    \P\left[\I^{u(1-\varepsilon)}\cap B\, \subseteq\, \range\cap B\,
      \subseteq\, \I^{u(1+\varepsilon)}\right]\geq 1 - C_{\ref{thm:coupling}} e^{-N^{\delta_{\ref{thm:coupling}}}}.
  \]
  Further, by the monotonicity of $(sa)$ and $(sb)$ bad events, the following inclusion holds:
  \begin{multline*}
    \left\{\text{$0$ is $s$-bad for the realization of $\range$},~
      \I^{u(1-\varepsilon)}\cap B\subseteq \range\cap B
      \subseteq \I^{u(1+\varepsilon)}\right\}\\
    \subseteq
    \left\{\text{$0$ is $(sa)$-bad for the realization of
        $\I^{u(1-\varepsilon)}$}\right\}\\
    \cup\left\{\text{$0$ is $(sb)$-bad for the realization of
        $\I^{u(1+\varepsilon)}$}\right\}.
  \end{multline*}
  By \cite[Lemmas~4.2 and 4.4]{DRS12}, the probabilities of the two events in the right hand side are bounded from above
  by $2\cdot 2^{-2^s}$.
\end{proof}

The next lemma implies that the assumptions of
Theorem~\ref{thm:isop:QKs} hold with a large probability for
$\set = \range$.

\begin{lemma}\label{l:proba}
  For each $u>0$, $\alpha\in(0,1)$, and $\varepsilon$ as in \eqref{eq:etau},
  there are
  $C_{\ref{l:proba}} = C_{\ref{l:proba}}(u,\varepsilon, \alpha)<\infty$ and
  $\delta_{\ref{l:proba}} = \delta_{\ref{l:proba}}(u,\varepsilon, \alpha)>0$
  such that for all $\lscale\geq C_{\ref{l:Guxk}}$,
  $L_0\geq C_{\ref{l:Guxk}}'$,  $s\geq 0$, and $K\ge L_s$ with
  $(K+4)L_s\leq \alpha N$
  \begin{equation}\label{eq:proba}
    \mathbb P\left[\begin{array}{c}\text{realization of $\range$ does not satisfy}
        \\ \text{any of (a)--(c) in Theorem~\ref{thm:isop:QKs}}\end{array}\right]
    \leq
    C_{\ref{l:proba}}\cdot (KL_s)^d\cdot \left(2^{-2^s}
      + e^{-L_s^{\delta_{\ref{l:proba}}}}\right).
  \end{equation}
\end{lemma}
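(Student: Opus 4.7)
The plan is to write
\[
  \mathbb P\bigl[\text{at least one of (a)--(c) fails for $\range$}\bigr]
  \leq \mathbb P[\text{(a) fails}] + \mathbb P[\text{(b) fails}] + \mathbb P[\text{(c) fails}]
\]
and bound each of the three probabilities separately by combining Lemma~\ref{l:Guxk} with the coupling of Theorem~\ref{thm:coupling}.

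For (a), I would apply a union bound over the at most $(K+4)^d$ vertices of $\GG_s\cap[-2L_s,(K+2)L_s)^d$. By translation invariance of the law of $\range$ on the torus and by Lemma~\ref{l:Guxk}, each such vertex is $s$-bad with probability at most $2\cdot 2^{-2^s}+C_{\ref{thm:coupling}}e^{-N^{\delta_{\ref{thm:coupling}}}}$; the hypotheses of Lemma~\ref{l:Guxk} are met thanks to $(K+4)L_s\leq\alpha N$ and $L_0\geq C_{\ref{l:Guxk}}'$. Since $L_s\leq\alpha N$, the term $e^{-N^{\delta_{\ref{thm:coupling}}}}$ is bounded by $e^{-L_s^{\delta}}$ for any $0<\delta\leq\delta_{\ref{thm:coupling}}$, and summing over the vertices gives a bound of order $(KL_s)^d(2^{-2^s}+e^{-L_s^\delta})$, of the form required by \eqref{eq:proba}.

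For (c), I would use Theorem~\ref{thm:coupling} to pass to the coupling event $\I^{u(1-\varepsilon)}\cap[0,\alpha N]^d\subseteq\range\cap[0,\alpha N]^d$, which fails with probability at most $C_{\ref{thm:coupling}}e^{-N^{\delta_{\ref{thm:coupling}}}}$. On this event it suffices to check that every $L_s$-box inside $[0,KL_s)^d$ meets $\I^{u(1-\varepsilon)}$. The identity $\mathbb P[\I^{u(1-\varepsilon)}\cap A=\emptyset]=\exp(-u(1-\varepsilon)\,\mathrm{cap}(A))$ and the standard capacity bound $\mathrm{cap}(A)\geq cL_s^{d-2}$ for an $L_s$-box in $\Z^d$ (using $d\geq 3$) yield a per-box failure probability at most $e^{-cL_s^{d-2}}\leq e^{-L_s^\delta}$ for any $\delta\leq d-2$ and $L_0$ large enough, and a union bound over the $O(K^d)$ boxes closes this part.

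The main obstacle will be (b), since it is a pointwise local-connectivity statement at scale $L_s$ which does not follow from (a) alone: a vertex $x\in\range$ need not lie in the large local component identified by the multiscale renormalization, nor in $\I^{u(1-\varepsilon)}$ on the coupling event. My plan is to exploit the sandwich $\I^{u(1-\varepsilon)}\subseteq\range\subseteq\I^{u(1+\varepsilon)}$ together with a local uniqueness estimate for random interlacements in the spirit of \cite[Lemmas~4.2 and~4.4]{DRS12}: with probability at least $1-e^{-cL_s^\delta}$ per scale-$L_s$ box, every vertex of $\I^{u(1+\varepsilon)}$ in that box is connected inside $\ballZ(\cdot,2L_s)$ to the unique large component of $\I^{u(1-\varepsilon)}$ in the same region. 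Transferring through the sandwich yields (b) for $\range$, and a union bound over $O(K^d)$ boxes gives a contribution of the same order as for (a) and (c). The real technical difficulty is producing such a mixed-level pointwise connectivity statement rather than a statement about large components alone.
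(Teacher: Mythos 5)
Your treatment of (a) and (c) is fine and is essentially the paper's argument: union bound over $\GG_s\cap[-2L_s,(K+2)L_s)^d$ with Lemma~\ref{l:Guxk} for (a), and the coupling plus the bound $\mathbb P[\I^{u(1-\varepsilon)}\cap[0,m)^d=\emptyset]\le e^{-cum^{d-2}}$ with a union bound over boxes for (c).

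The gap is in (b), and it is not just the technical difficulty you flag. Your plan is to prove an interlacement statement (every vertex of $\I^{u(1+\varepsilon)}$ in a box connects within $\ballZ(\cdot,2L_s)$ to the large local component of $\I^{u(1-\varepsilon)}$) and then ``transfer through the sandwich'' $\I^{u(1-\varepsilon)}\subseteq\range\subseteq\I^{u(1+\varepsilon)}$. This transfer does not work: condition (b) requires the connecting path to lie in $\range\cap\ballZ(x,2L_s)$, and the path produced by your interlacement statement lies in $\I^{u(1+\varepsilon)}$, which only contains $\range$ --- it is not contained in it. The lower containment does not help either, because the vertices $x,y\in\range$ in question need not belong to $\I^{u(1-\varepsilon)}$, so no initial segment of a path starting at $x$ can be forced into $\I^{u(1-\varepsilon)}$ (hence into $\range$). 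This is exactly the point the paper makes: the event in (b) is not monotone in the configuration, so the sandwich coupling combined with any purely interlacement-level result cannot yield it. The paper instead proves the local connectivity of $\range$ directly on the torus, adapting \cite[Lemma~8.1]{CernyPopov12} (whose proof uses the excursion structure of the walk and Lemmas~3.9, 3.10 and~4.3 of \cite{TeixeiraWindisch11}, valid for boxes up to size $N^{1/2}$): with probability at least $1-Ce^{-L_s^\delta}$, any two points of $\range$ in a box of side $\delta L_s$ are connected in $\range\cap\ballZ(x,L_s)$; this is then combined with the non-emptiness of $\delta L_s$-boxes (which \emph{is} monotone and follows from the coupling and \eqref{eq:emptyinter}) and a chaining argument to get (b). To repair your proof you would need some such argument exploiting that $\range$ is the trace of a single walk (e.g.\ that every $x\in\range$ lies on a trajectory piece of diameter at least $L_s$ inside $\ballZ(x,2L_s)$), not merely the inclusion $\range\subseteq\I^{u(1+\varepsilon)}$.
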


\begin{proof}[Proof of Lemma~\ref{l:proba}]
  By Lemma~\ref{l:Guxk} and translation invariance,
  \[
    \mathbb P\left[\begin{array}{c}
        \text{realization of $\range$ does not satisfy}\\
        \text{(a) in Theorem~\ref{thm:isop:QKs}}\end{array}\right]
    \leq (K+4)^d\cdot \left(2\cdot 2^{-2^s} + C_{\ref{thm:coupling}}
      e^{-N^{\delta_{\ref{thm:coupling}}}}\right).
  \]

  By \cite[(1.65)]{SznitmanAM}, there exists $c>0$ such that for any $u>0$
   $\varepsilon\in(0,\frac 12)$, and  $m\ge 1$,
  \begin{equation}
    \label{eq:emptyinter}
    \mathbb P[\I^{u(1-\varepsilon)}\cap [0,m)^d = \emptyset] \leq
    e^{-cm^{d-2}u}.
  \end{equation}
  Choosing $m=L_s$ and combining this fact with
  Theorem~\ref{thm:coupling}, we obtain that
  \[
    \mathbb P\left[\begin{array}{c}
        \text{realization of $\range$ does not satisfy}\\
        \text{(c) in Theorem~\ref{thm:isop:QKs}}
    \end{array}\right]
    \leq (KL_s)^d\cdot e^{-cuL_s^{d-2}}  + C_{\ref{thm:coupling}}
    e^{-N^{\delta_{\ref{thm:coupling}}}}.
  \]

  To estimate the probability that (b) does not occur, it is not enough to
  use the coupling from Theorem~\ref{thm:coupling} and corresponding
  random interlacements results because the event in (b) is not monotone.
  We thus need to adapt the techniques of \cite[Section~8]{CernyPopov12}.

  We first
  claim that there are large $C=C(u)$ and small
  $\delta=\delta (u)\in (0,1)$
  such that
  \begin{equation*}
    \mathbb P\big[\text{ $\exists\, x,y\in [0,\delta L_s]^d \cap \range$
        s.t.~$x,y$ are not connected in $\range \cap \ballZ(x,L_s)$}\big]
    \le  C e^{- L_s^\delta }.
  \end{equation*}
  Indeed, this can be proved as \cite[Lemma~8.1]{CernyPopov12}, replacing
  the box of size $\ln^\gamma N$ used there with the box of size
  $\delta L_s$.  The proof in \cite{CernyPopov12} uses ingredients
  from \cite{TeixeiraWindisch11}, namely Lemmas~3.9, 3.10 and~4.3, which
  hold true for boxes up to size $N^{\frac12}$. Since
  $\delta L_s \le N^{1/2}$, by the assumption on $K$, we can use them
  without modifications.

  Using the translation invariance,
  \begin{equation*}
    \mathbb P\left[\parbox{8cm}{$\exists\,x,y\in \range \cap [0,KL_s)^d$ s.t.
        $|x-y|_\infty\le \delta L_s $ and
        $x,y$ are not connected in $\range \cap \ballZ(x,L_s)$}\right]
    \le C (KL_s)^d e^{-L_s^\delta }.
  \end{equation*}
  Moreover, using \eqref{eq:emptyinter} with $m=\delta L_s$,
  \begin{equation*}
    \mathbb P\big[\text{$\exists\,x\in \range \cap [0,KL_s)^d$ s.t.
        $\big(x+[0,\delta L_s)^d\big)\cap \range = \emptyset$}\big]
    \le C (KL_s)^d e^{-c u L_s^{d-2}}.
  \end{equation*}
  Finally, as in the proof of \cite[Theorem~1.6]{CernyPopov12}, assuming
  that the events of the last two displays hold, then
  for every
  $x,y\in \range \cap[0,KL_s)^d$ such that $|x-y|_\infty\le L_s$ we can
  find a sequence $x=x_0,\dots,x_k=y$ with $k\le 2\delta^{-1}$,
  $x_i\in \range \cap [0,KL_s)^d$, and
  $|x_i-x_{i-1}|_\infty\le \delta L_s$ for
  all $1\le i\le k$. In particular also $x_{i-1}$ and $x_i$ are connected
  in $\range \cap \ballZ(x_i,L_s)$ and
  thus $x,y$ are connected in $\range \cap \ballZ(x,2L_s)$. It follows that
  \[
    \mathbb P\left[\begin{array}{c}
        \text{realization of $\range$ does not satisfy}\\
        \text{(b) in Theorem~\ref{thm:isop:QKs}}\end{array}\right]
    \le C (KL_s)^d e^{-L_s^\delta }.
  \]
  By combining the three bounds and using the relation $L_s \leq \alpha N$,
  we obtain the desired bound \eqref{eq:proba}.
\end{proof}

\bigskip

We will prove Theorem~\ref{thm:isopineq:range} by making a suitable
choice of $s$ and $K$ in Lemma~\ref{l:proba} as functions of $N$.

\begin{proof}[Proof of Theorem~\ref{thm:isopineq:range}]
  Fix $N\geq 1$, $u>0$, $\mu\in(0,1)$, $\varepsilon>0$ satisfying
  \eqref{eq:etau}, and $\alpha = \frac{6}{7}$. It suffices to
  consider $\mu= \frac12$. Indeed, if $\mu>\frac12$, then the
  isoperimetric inequality for sets $A\subset\range$ with
  $\frac12|\range|\leq |A|\leq \mu|\range|$ follows from the
  isoperimetric inequality for $\range\setminus A$, see, e.g.,
  \cite[Remark~5.2]{Sapozhnikov14}.

Take the scales as in \eqref{def:scales} with
  $\lscale\geq C_{\ref{l:Guxk}}$ and $L_0\geq C_{\ref{l:Guxk}}'$. Without
  loss of generality, we assume that $N\geq 7L_0^{d^3+1}$. Let
  \[
    s = \max\left\{s'\geq 0~:~ L_{s'}^{d^3+1}
      \leq \frac{N}{7}\right\}
    \qquad\text{and}\qquad
    K = \min\left\{K'\geq 1~:~ K'L_s\geq \frac{N}{7}\right\}.
  \]
Notice that $K\geq L_s^{d^3}$ and $(K+4)L_s \leq \frac{6N}{7}$.
Thus, the parameters $s$ and $K$ satisfy the conditions of Corollary~\ref{cor:isop} and Lemma~\ref{l:proba}.
To apply Corollary~\ref{cor:isop}, for each $x\in [0,N)^d$, we define the local enlargement of $\range\cap(x+[0,KL_s)^d)$ by
\[
\cemax_x = \left\{y\in\range~:~\begin{array}{c}\text{$y$ is connected to some $z\in\range\cap(x+[0,KL_s)^d)$}\\ \text{by a path in $\range\cap \ballZ(z,2L_s)$}\end{array}\right\}.
\]
Here as before, we consider $\range$ as a subset of $\Z^d$ obtained by the canonical periodic embedding of $\torus$
in $\Z^d$. By Corollary~\ref{cor:isop}, Lemma~\ref{l:proba}, and translation invariance, for some $\beta =\beta(u)>0$ and $\gamma=\gamma(u)>0$,
\begin{equation}\label{eq:isop:allN7}
\mathbb P\left[\begin{array}{c}
\text{for all $x\in [0,N)^d$, $|\range\cap(x+[0,KL_s)^d|\geq \beta N^d$, and}\\
\text{for all $A\subset \cemax_x$ with $|A|\leq \frac12\,|\cemax_x|$,
$|\partial_{\cemax_x} A|\geq \gamma\cdot |A|^{1 - \frac{1}{d} + \frac{1}{d^2}}\cdot N^{-\frac{1}{d}}$}
    \end{array}\right]
    \geq 1- N^d\cdot \mathrm{RHS}_{\ref{l:proba}}\,,
\end{equation}
where
\[
\mathrm{RHS}_{\ref{eq:proba}} = C_{\ref{l:proba}}\cdot (KL_s)^d\cdot \left(2^{-2^s} + e^{-L_s^{\delta_{\ref{l:proba}}}}\right)
\]
is the right hand side of \eqref{eq:proba}.
The proof of Theorem~\ref{thm:isopineq:range} will be completed once we prove that for all $N\geq N_0(u)$, (a) the event in \eqref{eq:isop:allN7} implies
the event in \eqref{eq:isopineq:range}, with a possibly different $\gamma$,
and (b) $N^d\cdot \mathrm{RHS}_{\ref{eq:proba}}\leq e^{-(\log N)^2}$.

We begin showing (a). Assume that the event in \eqref{eq:isop:allN7} occurs. Let $A$ be a subset of $\range$ with $|A|\leq \frac 12|\range|$.
For each $x\in[0,N)^d$, let $A_x = A\cap \cemax_x$.

We choose points $x_1,\ldots,x_{7^d}\in [0,N)^d$ such that
\[
\range = \bigcup_{i=1}^{7^d} \cemax_{x_i}.
\]
Then $A = \cup_i A_{x_i}$ and $|A|\leq \sum_i|A_{x_i}|$.
Assume first that for all $i$, $|A_{x_i}|\leq \frac 12|\cemax_{x_i}|$.
Then,
\[
|\partial_{\cemax_{x_i}} A_{x_i}|\geq \gamma\cdot |A_{x_i}|^{1 - \frac{1}{d} + \frac{1}{d^2}}\cdot N^{-\frac{1}{d}}.
\]
Since for each $i$, $\partial_{\cemax_{x_i}} A_{x_i}\subset \partial_\range A$, we obtain that
\[
|\partial_\range A| \geq \frac{1}{7^d}\,\sum_i|\partial_{\cemax_{x_i}} A_{x_i}| \geq \frac{1}{7^d}\,\sum_i\gamma\cdot |A_{x_i}|^{1 - \frac{1}{d} + \frac{1}{d^2}}\cdot N^{-\frac{1}{d}}
\geq  \frac{1}{7^{d}}\,\gamma\cdot |A|^{1 - \frac{1}{d} + \frac{1}{d^2}}\cdot N^{-\frac{1}{d}},
\]
where in the last step we used the inequality $\sum_i |A_{x_i}|^{1 - \frac{1}{d} + \frac{1}{d^2}} \geq \left(\sum_i|A_{x_i}|\right)^{1 - \frac{1}{d} + \frac{1}{d^2}}$.
Thus, in this case, the event in \eqref{eq:isop:allN7} implies the event in \eqref{eq:isopineq:range} with $\gamma_{\ref{eq:isopineq:range}} = \frac{1}{7^{d}}\gamma$.

It remains to consider the case when for some $x$, $|A_x|>\frac12|\cemax_x|$.
We claim that in this case for $N\geq N_0(u)$, there exist $x$ such that
\begin{equation}\label{eq:x}
\frac 12 \beta N^d \leq |A_x|\leq \left(1 - \frac{\beta}{2\cdot 7^d}\right)|\cemax_x|.
\end{equation}
Assume that it is not the case. Since there exists $x$ such that
$|A_x|>\frac12|\cemax_x|$ and $|\cemax_x| \geq \beta N^d$, the
non-validity of \eqref{eq:x} implies that
$|A_x| > \left(1 - \frac{\beta}{2\cdot 7^d}\right) |\cemax_x|$. Assume
that the last inequality holds for all $x$. Then,
\[
|\range\setminus A| \leq \sum_i|\cemax_{x_i} \setminus A_{x_i}| < \frac{\beta}{2\cdot 7^d} \sum_i|\cemax_{x_i}|
\leq \frac{\beta}{2\cdot 7^d} \cdot 7^d|\range| \leq \frac12 |\range|,
\]
which contradicts the assumption that $|A|\leq \frac 12 |\range|$.
Thus, for each $x$, either $|A_x| < \frac12 \beta N^d$ or $|A_x| > \left(1 - \frac{\beta}{2\cdot 7^d}\right) |\cemax_x|$,
and both types exist. In particular, there exist $x,y$ with $|x-y|_1 = 1$ such that
$|A_x| < \frac12\beta N^d$ and $|A_y| > \left(1 - \frac{\beta}{2\cdot 7^d}\right) |\cemax_y|\geq \beta\left(1 - \frac{\beta}{2\cdot 7^d}\right)N^d$.
For these $x$ and $y$, on the one hand,
\[
|A_y\setminus A_x|\geq |A_y| - |A_x| \geq \frac13\beta N^d,
\]
and on the other,
\[
|A_y\setminus A_x|\leq |\cemax_y\setminus \cemax_x| \leq 2\left(\left((K+4)L_s\right)^d - \left(KL_s\right)^d \right)
\leq \frac{16d}{K}\,(KL_s)^d,
\]
where the last inequality holds for large enough $K$. Since $KL_s\leq N$ and $K\geq \sqrt{\frac N7}$,
the two bounds for $|A_y\setminus A_x|$ cannot be fulfilled simultaneously if $N\geq N_0(u)$.
This contradiction proves \eqref{eq:x}.

Let $x\in[0,N)^d$ satisfy \eqref{eq:x}.
Either $|A_x|\leq \frac12|\cemax_x|$ or $|\cemax_x\setminus A_x|\leq \frac12|\cemax_x|$. Thus,
\[
|\partial_{\cemax_x} A_x| \geq \gamma\cdot \min\left(|A_x|,|\cemax_x\setminus A_x|\right)^{1 - \frac{1}{d} + \frac{1}{d^2}}\cdot N^{-\frac{1}{d}}.
\]
Since $\partial_\range A\supseteq\partial_{\cemax_x} A_x$ and
\[
\min\left(|A_x|,|\cemax_x\setminus A_x|\right)\geq \min\left(\frac12\beta N^d,\frac{\beta}{2\cdot7^d}|\cemax_x|\right)\geq \frac{\beta^2}{2\cdot 7^d} N^d \geq \frac{\beta^2}{7^d}|A|,
\]
we obtain that $|\partial_\range A| \geq \frac{\beta^2}{7^d}\gamma\cdot |A|^{1 - \frac{1}{d} + \frac{1}{d^2}}\cdot N^{-\frac{1}{d}}$.
Thus, if \eqref{eq:x} holds, then the event in \eqref{eq:isop:allN7} implies
the event in \eqref{eq:isopineq:range} with $\gamma_{\ref{eq:isopineq:range}} = \frac{\beta^2}{7^d}\gamma$.
Putting the two cases together gives
  \[
    \mathbb P\left[\begin{array}{c}
        \text{for any $A\subset \range$ with
          $|A|\leq \frac12 |\range|$,}\\
        \text{$|\partial_\range A|\geq \frac{\beta^2}{7^d}
          \gamma\cdot |A|^{1 - \frac{1}{d} + \frac{1}{d^2}}\cdot N^{-\frac{1}{d}}$}
    \end{array}\right]
    \geq 1- N^d\cdot \mathrm{RHS}_{\ref{eq:proba}}.
  \]

\medskip

It remains to prove that $N^d\cdot \mathrm{RHS}_{\ref{eq:proba}}\leq e^{-(\log N)^2}$ for $N\geq N_0(u)$.
By \eqref{def:scales},
  \[
    \frac12\cdot N^{\frac{1}{d^3+1}}
    \leq \left(\frac{N}{7}\right)^{\frac{1}{d^3+1}}
    < L_{s+1}
    = l_s\cdot L_s \leq (4l_{s-1})^2\cdot L_s\leq 16\cdot L_s^3.
  \]
  Thus, $L_s\geq \frac{1}{4}\cdot N^{\frac{1}{3(d^3+1)}}$. On the other
  hand, by \eqref{def:scales},
  $L_s\leq L_0\cdot \lscale^{2s}\cdot 4^{s^3}$, which implies that there
  exists $c = c(L_0,\lscale)>0$ such that $s\geq c(\log N)^{\frac 13}-1$
  and $2^s \geq c(\log N)^4$. Thus, there exists $C = C(u)<\infty$ such that
$(KL_s)^d\cdot \left(2^{-2^s} + e^{-L_s^{\delta_{\ref{l:proba}}}}\right)\leq C e^{-(\log N)^3}$.
By taking $N$ large enough,
\begin{equation}\label{eq:rhs:proba}
N^d\cdot \mathrm{RHS}_{\ref{eq:proba}} < e^{-(\log N)^2}.
\end{equation}
The proof of Theorem~\ref{thm:isopineq:range} is complete.
\end{proof}

\section{Proof of Theorem~\ref{thm:mixingtime}} 

We begin with the proof of the upper bound.
It is very similar to the proof of
\cite[Theorem~3.1]{ProcacciaShellef}, which relies on the bound on the
mixing time from \cite[Theorem~1]{MorrisPeres}. For $r>0$, let
\[
  \phi(r) = \inf\left\{\frac{|\partial_\range A|}{|A|}~:~A\subset\range,
    0<|A|\leq \min\left\{r,\left(1 - \frac{1}{4d}\right)|\range|\right\}\right\}.
\]
By \cite[(16)]{ProcacciaShellef}, there exists $C = C(d)<\infty$ such that
\[
  \mix(\range) \leq C\int_{1}^{32dN^d}\frac{dr}{r\phi(r)^2}.
\]
Consider the event from \eqref{eq:isopineq:range} for $\mu = (1-\frac{1}{4d})$.
For each realization of $\range$ from this event and all $r$,
$\phi(r) \geq \gamma\cdot N^{-\frac{1}{d}}\cdot r^{-\frac{d-1}{d^2}}$.
Thus,
\[
  \int_{1}^{32dN^d}\frac{dr}{r\phi(r)^2} \leq \frac{1}{\gamma^2}
  \cdot N^{\frac{2}{d}}\cdot \int_{1}^{32dN^d}r^{\frac{2(d-1)}{d^2} - 1}dr
  \leq C\cdot N^2.
\]
By \eqref{eq:isopineq:range}, there exists $C = C(u)<\infty$ such that
\[
  \mathbb P\left[\mix(\range)\leq CN^2\right]\geq 1 - Ce^{-(\log N)^2}.
\]

\medskip
We proceed with the proof of the lower bound.
By the volume bound in \eqref{eq:isop:allN7} and \eqref{eq:rhs:proba}, there exist $C=C(u)<\infty$ and $\beta = \beta(u)>0$
such that for all $N\geq 1$,
\[
\P\left[\text{for all $x\in [0,N)^d$, $|\range\cap(x+[0,\frac N7)^d)|\geq \beta N^d$}\right]\geq 1 - C e^{-(\log N)^2}.
\]
Assume the occurrence of event under the probability.
By \cite[Theorem~2.1]{BarlowPerkins}, there exists $C=C(u)<\infty$ such that for each $\varepsilon>0$ and $n< \frac13 N$,
one can find $x=x(\varepsilon,n)\in \range$ so that
\[
\sum_{y\in \ballZ(x,n)} p_{\lfloor \varepsilon n^2\rfloor }(x,y) \geq 1 - C\,\varepsilon.
\]
On the other hand,
\[
\sum_{y\in \ballZ(x,n)}\pi(y) \leq \frac{2d (2n + 1)^d}{\beta N^d}.
\]
We take $\varepsilon$ small enough and $n<\varepsilon N$ so that the
first sum is larger than $\frac12$ and the second smaller than $\frac14$. Then there exists at
least one $y\in \ballZ(x,n)$ such that
$|p_{\lfloor \varepsilon n^2\rfloor}(x,y) - \pi(y)| \geq \frac14\pi(y)$.
Thus, $\mix(\range) \geq \varepsilon^3 N^2$, and we conclude that for some
$c=c(u)>0$ and $C=C(u)<\infty$,
\[
  \mathbb P\left[\mix(\range)\geq cN^2\right]\geq 1 - Ce^{-(\log N)^2}.
\]
The proof of Theorem~\ref{thm:mixingtime} is complete.
\qed

\medskip

\begin{remark}
  \begin{enumerate}[(a)]
    \item
    In the proof of the lower bound on $\mix(\range)$ we only used that
    $\range$ has positive density in large subboxes of $\torus$. This
    follows from the facts that $\range$ dominates random interlacements
    and the random interlacements are dense in large boxes. Both facts
    hold with probability $\geq 1 - Ce^{-N^\delta}$. Thus,
    $\mathbb P\left[\mix(\range)\geq cN^2\right]\geq 1 - Ce^{-N^\delta}$.

    \item
    The method of this note also applies (with minimal changes) to the
    largest connected component of the vacant set of the range
    $\mathcal V^u_N = \torus\setminus\range$, when $u$ is strongly
    supercritical, see \cite[Definition~2.4]{TeixeiraWindisch11}. For instance,
    the property (b) of Theorem~\ref{thm:isop:QKs} for the
    largest cluster is shown to be very likely for strongly supercritical
    $u$'s in \cite[Section~2.5]{DRS12}. So far, it is only known that
    strongly supercritical $u$'s exist if $d\geq 5$, see \cite{Teixeira11}.

    \item
    It is natural to consider $\range$ as a random subgraph of $\torus$ with
    edges traversed by the random walk. All our results remain true in
    this case. The proofs presented in the note are robust to this
    change, but the external ingredients should be adapted to
    corresponding bond models. Although the changes needed are only
    notational, presenting them would deviate us from the main goal of
    this note.
  \end{enumerate}
\end{remark}


\providecommand{\bysame}{\leavevmode\hbox to3em{\hrulefill}\thinspace}
\providecommand\MR{}
\renewcommand\MR[1]{\relax\ifhmode\unskip\spacefactor3000
\space\fi \MRhref{#1}{#1}}
\providecommand\MRhref{}
\renewcommand{\MRhref}[2]%
{\href{http://www.ams.org/mathscinet-getitem?mr=#1}{MR #2}}
\providecommand{\href}[2]{#2}


\end{document}